\newtheorem{twr}{Theorem}
\newtheorem{dfn}{Definition}
\newtheorem{prop}{Proposition}
\title{\textbf{Generalized symmetry reduction\\ of nonlinear differential equations}}
\author[1,2]{I.M.~Tsyfra}
\author[3]{W.~Rzeszut}
\author[1]{V.A.~Vladimirov}
\affil[1]{AGH University of Science and Technology, Faculty of Applied Mathematics, 30 Mickiewicza Avenue, 30-059, Krakow, Poland}
\affil[2]{Institute of Geophysics of NAS of Ukraine, Kyiv, Ukraine}
\affil[3]{Faculty of Mathematics and Computer Science, Jagiellonian University, Krakow, Poland}
\date{}
\begin{document}

\maketitle

\begin{abstract}
We study the application of generalized symmetry for reducing nonlinear partial differential equations. We construct the ansatzes for dependent variable $u$ which reduce the scalar partial differential equation with two independent variables to systems of ordinary differential equations. The operators of Lie--B\"acklund symmetry of the second order ordinary differential equation are used. We apply the method to nonlinear evolutionary equations and find solutions which cannot be obtained in the framework of classical Lie approach. The method is also applicable to partial differential equations which are not restricted to evolution type ones. We construct the solution of nonlinear hyperbolic equation depending on an arbitrary smooth function on one variable. We study also the correlation between the dimension of symmetry Lie algebra and possibility of constructing non-invariant solutions to the equation under study.
\end{abstract}

\section{Introduction}

It is well known that the conditional Lie--B\"acklund symmetry method \cite{FokLiu, Zhd} is successfully used for constructing exact solutions of nonlinear evolution equations. In \cite{Zhd} Zhdanov proved the theorem on the connection between the generalised conditional symmetry and reduction of evolutionary equations to the system of ordinary differential equations. It is worth pointing out that the number of differential equations in this system is equal to the number of unknown functions.

It turns out that
the symmetry reduction method is also applicable to nonevolutionary differential equation
\[\eta(t,x,u,u_x,\ldots , u_{(k)})=0\]
if $X=\eta(t,x,u,u_x,\ldots , u_{(k)})\partial_u$, where $u_{(k)}$ denotes all partial derivatives with respect to $t$, $x$ of $k$-th order, is the Lie--B\"acklund symmetry operator of a nonlinear or linear ordinary differential equation \cite{Tsy}. Thus the approach can be used for symmetry reduction of partial differential equations which are not restricted to evolution type ones. It can be applied to nonevolutionary equations and even to ordinary differential equations.
Moreover in the framework of our approach one can construct an ansatz which reduces nonevolutionary partial differential equations to the system of ordinary differential equations and also the number of equations is smaller then the number of unknown functions. This enables us to find solutions depending on arbitrary functions.
In this paper we use the method proposed in \cite{Tsy}, which is a~generalization of the Svirshchevskii's method~\cite{Svirsh}, meaning we can analyze symmetries of a nonlinear (or nonhomogeneous linear) equation together with ODEs which include, besides dependent and independent variables, parametric variables and derivatives with respect to them. We apply the method to nonlinear diffusion equations in nonhomogeneous medium which enables us to reduce it to the system of two ODEs and obtain exact solutions of the equation under study.

\section{On application of the Lie--B\"acklund symmetry reduction method\\ to differential equations with two independent variables}
In this section we discuss the connection between symmetry and reduction of partial differential equations.
At first we give the definition of Lie--B\"acklund symmetry operator of ordinary differential equation.
Consider differential equation
\begin{equation}\label{orddef}
H\left (t,x,u, \frac{\partial u}{\partial x},\ldots, \frac{\partial^p u}{\partial x^p} \right )=0,
\end{equation}
where $u=u(t,x)$, $H$ is a smooth function. Let us denote by $L$ a set of all differential consequences with respect to $t$ and $x$. We treat \eqref{orddef} to be a partial differential equation which does not include the partial derivatives with respect to $t$.

\begin{dfn} We say that
\begin{equation*}
X=\eta(t,x,u,u_x,\ldots , u_{(k)})\partial_u
\end{equation*}
is a Lie--B\"acklund symmetry operator of equation \eqref{orddef} if the following condition
\begin{equation*}
X^{(k)} H\left (t,x,u, \frac{\partial u}{\partial x},\ldots, \frac{\partial^p u}{\partial x^p} \right ) \Big|_{L}=0
\end{equation*}
holds, where $X^{(k)}$ is a prolongation of the generator of a Lie--B\"{a}cklund symmetry
$X$.
\end{dfn}
At first we consider differential equations obtained with the help of symmetry operators of equation
\begin{equation}\label{2dest}
u_{xx}-\frac{2}{x^2}u=0,
\end{equation}
 where $p(x)=\frac{2}{x^2}$ is the solution of stationary Korteweg--de~Vries equation $p_{xxx}-6pp_x=0$. This equation
admits the $3$-dimensional Lie algebra with basic operators $Q_1=u_t\partial_u$, $Q_2=\big(u_{xxx}-3\frac{u_{xx}u}{u_x}\big)\partial_u$, $Q_3=u\partial_u$.
 Equation \eqref{2dest} is integrable by quadratures and thus we obtain the ansatz
\begin{equation}\label{2deanz}
u=\varphi_1(t)x^2+\frac{\varphi_2(t)}{x},
\end{equation}
where $\varphi_1(t)$, $\varphi_2(t)$ are unknown functions, which reduces nonlinear evolution equation
\begin{equation}
u_t-u_{xxx}+3\frac{u_{xx}u_x}{u}-\lambda u=0, \label{tsyfra:19a}
\end{equation}
where $\lambda={\rm const}$ to the system of two ordinary differential equations
\[
\varphi_1'(t)=\lambda\varphi_1, \quad \varphi_2'(t)=\lambda\varphi_2-12\varphi_1.
\]
The solution of this system has the form
\begin{equation}\label{solre}
\varphi_1=C_1{\rm e}^{\lambda t},\quad \varphi_2=(C_2-12C_1t){\rm e}^{\lambda t},\quad C_1, C_2={\rm const}.
\end{equation}
Substituting \eqref{solre} in \eqref{2deanz} one can obtain the solution of \eqref{tsyfra:19a}.

Next we consider solution $p(t,x)=\frac{x}{6(t+1)}$ of nonstationary Korteweg--de~Vries equation $p_t =p_{xxx}-6pp_x$. In this case the ordinary differential equation
\begin{equation}\label{airye}
u_{xx}-\frac{x}{6(t+1)}u=0
\end{equation}
does not admit the operators $Q_1=u_t\partial_u$ and $Q_2=\big(u_{xxx}-3\frac{u_{xx}u}{u_x}\big)\partial_u$ but admits the operator $Q_1-Q_2=Q_3=\big(u_t-u_{xxx}+3\frac{u_{xx}u}{u_x}\big)\partial_u$ and $Q_4=u\partial_u$.
Equation \eqref{airye} is not integrable by quadratures and its solution is expressed by Airy's functions.
One can now apply the reduction method again with \eqref{2dest} replaced by \eqref{airye} to obtain the ansatz which explicitly depends on $t$ and reduces the evolution equation \eqref{tsyfra:19a} to a~system of ordinary differential equations.
Note that operators $Q_3$, $Q_4$ admitted by equation \eqref{airye} form the 2-dimensional Lie algebra with commutation relation
$[Q_3,Q_4]=Q_3$. Nevertheless equation \eqref{airye} is not integrable by quadratures. This observation seems to be of independent interest.

Our next goal is to show that the method is applicable to nonevolution type partial differential equations. For this purpose we modify equation \eqref{airye} by adding the term $\frac{5}{16x^2}$ which vanishes as $x\to \infty$ and therefore we will consider the following differential equation
\begin{equation}\label{airye8}
u_{xx}- \left (\frac{x}{6(t+1)}+\frac{5}{16x^2}\right )u=0.
\end{equation}
It is invariant with respect to the two-parameter Lie group of point transformations with generators
\[
Y_0=u\partial_u, \quad Y_1=\left (\frac{2}{\sqrt{x}}u_x+\frac{1}{2}x^{-3/2}u \right )\partial_u=\eta_1\partial_u.
\]
From this it immediately follows that equation \eqref{airye8} is integrable by quadratures. To obtain nonevolutionary equations we look for symmetry operators of the form $Y=\eta(t,x,u,u_x,u_t,u_{xt})\partial_u$.
We proved that it admits the Lie--B\"acklund operators
\[
Y_2=\left (\frac{2}{\sqrt{x}}u_{xt}+\frac{1}{2}x^{-3/2}u_t+\frac{x^{3/2}}{9(t+1)^2}u \right )\partial_u=\eta_2 \partial_u,
\quad Y_3=\sqrt{x}\left ( \frac{3(t+1)}{2}\eta_1^2-y^2 \right )y \partial_u=\eta_3 \partial_u
\]
as well. We emphasize that $\eta_2$ depends explicitly on parametric variable $t$ and derivatives $u_{tx}$. This enables us to find nonevolutionary nonlinear differential equations which can be reduced to a system of ordinary differential equations by appropriate ansatz. One can use a linear combination or commutators of operators $Y_1$, $Y_2$, $Y_3$. We consider the symmetry operator $Y_2-Y_3$. Then nonevolutionary differential equation is written in the form
\begin{equation}\label{nevequat}
\eta_2=\eta_3
\end{equation}
in this case.
 By using symmetry properties we have integrated equation \eqref{airye8} and thus obtained the ansatz
\begin{equation*}
u=x^{-1/4}\left (\varphi_1(t){\rm e}^{k(t)\omega}+\varphi_2(t){\rm e}^{-k(t)\omega} \right ),
\end{equation*}
where $\omega =x^{3/2}$, $k(t)=\frac{\sqrt{2}}{3\sqrt{3(t+1)}}$, $\varphi_1(t)$, $\varphi_2(t)$ are unknown functions,
which reduces \eqref{nevequat} to the system of ordinary differential equations
\begin{equation*}
\varphi_1'-\frac{\varphi_1}{2(t+1)}=-2\sqrt{6(t+1)}\varphi_1^2\varphi_2, \quad
\varphi_2'-\frac{\varphi_2}{2(t+1)}=2\sqrt{6(t+1)}\varphi_1\varphi_2^2.
\end{equation*}
This example demonstrates rather strikingly that the method can be applied to nonevolution type partial differential equation. Moreover applying of this method to nonevolutionary nonlinear hyperbolic equation
\begin{equation}\label{waveeqat}
u_{x_1x_2}=u_{x_1}F(u_{x_1}-u),
\end{equation}
where $F$ is arbitrary function on one variable yields the solution
\begin{equation*}
u=-\varphi_1(x_2)+ C{\rm e}^{x_1+\int F(\varphi_1(x_2)){\rm d}x_2}, \quad C={\rm const}
\end{equation*}
parametrized by arbitrary function $\varphi_1(x_2)$. This solution is obtained by using the Lie--B\"acklund symmetry operators
$Q_1=u_{x_1x_2}\partial_u$ and $Q_2=u_{x_1}F(u_{x_1}-u)\partial_u$ of ordinary differential equation $u_{x_1x_1}-u_{x_1}=0$.
The ODE clearly has not to be linear. For example $u_{x_1x_1}+u_{x_1x_1}^2=0$ admits the Lie--B\"acklund operators
$Q_1=\frac{u_{x_1x_2}}{u_{x_1^2}}\partial_u$, and $Q_2=F(u+\ln u_{x_1})\partial_u$, where $F$ is arbitrary function on its argument. One can construct the solution
\begin{equation*}
u=\ln \left (x_1-\int F(\varphi_1(x_2)){\rm d}x_2\right ) +\varphi_1(x_2),
\end{equation*}
where $\varphi_1(x_2)$ is arbitrary smooth function of nonlinear equation
\begin{equation}\label{waveeqa1}
u_{x_1x_2}=u_{x_1}^2F(u+\ln u_{x_1}),
\end{equation}
by virtue of this method.
Obviously we can use Lie--B\"acklund symmetry operators of first order ordinary differential equations too.
For example let consider differential equation
\begin{equation}\label{de1ord}
\psi_x=u\psi^2+v.
\end{equation}
It turns out that equation \eqref{de1ord} admits the Lie--B\"acklund operator
\[Q=\left(-\psi_t+\exp\left(\frac{\psi_x-v}{\psi^2}\right)\cdot\left(\frac{\psi_x-v}{\psi^2}\right)_x\psi^2+\beta\right)\partial_\psi\]
provided $u$, $v$ satisfy the following system of defining equations
\begin{equation}\label{ordsys1}
u_t=({\rm e}^uu_x)_x,\quad v_t=\left (\frac{{\rm e}^uu_xv}{u} \right )_x.
\end{equation}
From what has already been shown we conclude that the reduction method can be applied to nonlinear differential equation
\[\psi_t=\exp\left(\frac{\psi_x-v}{\psi^2}\right)\cdot\left(\frac{\psi_x-v}{\psi^2}\right)_x\psi^2+\frac{{\rm e}^uu_xv}{u}, \]
where $u(t,x)$, $v(t,x)$ is a solution of system \eqref{ordsys1}.

The main idea of the paper is illustrated by the example of the Korteweg--de~Vries equation. One can verify that equation
\begin{equation}
u_{xx}+\frac{u^2}{2}= 0.
\label{zwy1d}
\end{equation}
is invariant with respect to Lie--B\"acklund vector field $X=(u_{xxx}+uu_x)\frac{\partial}{\partial u}$.
Note that we can obtain any desired coefficients for the terms of equation $u_t=u_{xxx}-6uu_x$ by rescaling dependent variable and independent variables $t$ and $x$.
We have proved that equation \eqref{zwy1d} is invariant with respect to a two parameter group of contact transformations.
The basis elements of corresponding Lie algebra are as follows
\begin{gather*}
Q_1=u_xh_1\big(3u_x^2+u^3\big)\frac{\partial}{\partial u},
\\
Q_2=u_x h_2\big(3u_x^2+u^3\big)\int^{u}_0 \frac{{\rm d} s}{\big(u_x^2+\frac{u^3}{3}-\frac{s^3}{3}\big)^{\frac{3}{2}}}\frac{\partial}{\partial u},
\end{gather*}
where $h_1$, $h_2$ are arbitrary smooth functions.
The advantage of using the approach lies in the fact that a~linear combination $\alpha_1X+\alpha_2Q_1+\alpha_3Q_2$ where
$\alpha_1$, $\alpha_2$, $\alpha_3$ are arbitrary real constants is also the symmetry operator of \eqref{zwy1d} and therefore
 the method can applied to nonlinear differential equation
\begin{equation}\label{kdv4}
u_t=u_{xxx}+uu_x+u_xh_1\big(3u_x^2+u^3\big)+u_x h_2\big(3u_x^2+u^3\big)\int^{u}_0 \frac{{\rm d} s}{\big(u_x^2+\frac{u^3}{3}-\frac{s^3}{3}\big)^{\frac{3}{2}}}.
\end{equation}
Then the anzatz
\begin{equation}\label{kdv3}
\int^{u}_0 \frac{{\rm d} s}{\sqrt{\varphi_1(t)-\frac{s^3}{3}}}=x+\varphi_2(t)
\end{equation}
generated by \eqref{zwy1d} reduces partial differential equation \eqref{kdv4}
to the system of two ordinary differential equations
\begin{equation*}
\varphi_1'(t)=2h_2(3\varphi_1(t)),\quad
\varphi_2'(t)=h_1(3\varphi_1(t)).
\end{equation*}
Equation \eqref{kdv4} in the general form is invariant under 2-dimensional Lie algebra with basis elements
$\{\partial_t, \partial_x \}$. Taking into account that $\frac{\partial u}{\partial \varphi_1}$, $\frac{\partial u}{\partial \varphi_2}$ obtained from \eqref{kdv3} are linearly independent one can easily see that the solutions of \eqref{kdv4}
in general case are not invariant with respect to one parameter Lie group with generator $\alpha_1\partial_t+\alpha_2\partial_x$
where $\alpha_1^2+\alpha_2^2\not = 0$. Hence we conclude that the method enables us to construct solutions to equations from class \eqref{kdv4} which are not invariant in the classical Lie sense.

 Next we consider a nonlinear evolutionary equation that describes diffusion processes in inhomogeneous medium.
We are looking for second order ordinary differential equations of the form
\begin{equation}\label{odedfz}
u_{xx}=V(x,u,u_x),
\end{equation}
 which belongs to the class
\eqref{orddef} when $\frac{\partial H}{\partial t}=0$, 
admitting a Lie--B\"acklund symmetry with a generator of the form $X=\big(\frac{K(x)}{u}\big)_{xx}\frac{\partial}{\partial u}$ corresponding to the right hand side of our diffusion equation. We assume $K$ and~$V$ are some sufficiently smooth functions and~$K$ is nonzero. Function~$V$ should satisfy the determining equation
\[X^{(2)}\big(u_{xx}-V(x,u,u_x)\big)\Big|_{M}=0\]
for a given function $K$, where $X^{(2)}$ is a prolongation of the generator of a Lie--B\"{a}cklund symmetry
$X$, and $M$ is a set of all differential consequences of equation \eqref{odedfz} with respect to the variable $x$. Note that condition is equivalent to that for the generalized conditional symmetry. Therefore one can use the generalized conditional symmetry method for searching for functions $V$ as was shown in~\cite{Srg}.

 For the sake of being able to split the equation above into an overdetermined system of differential equations, we restrict our search to a function $V=\sum_{i,j\in\mathbb{Z}}A_{ij}(x)u^iu_x^j$, that is a power series in both $u$ and $u_x$. We focus not a complete classification but rather particular $K$ and $V$ for which this method may produce nonclassical solutions. Due to a great number of determining equations and cases to consider we will move past the explanation of calculations and present the results in the following Proposition.
\begin{prop}

Equation $u_{xx}=V(x,u,u_x)$ admits the LBS operator
$X=\big(\frac{K(x)}{u}\big)_{xx}\partial_u$
if K(x) and V have the form
\begin{enumerate}[$a)$]
\item $K(x)=\mathrm{e}^{\beta x}$, $V=3\frac{u_{x}^2}{u}-3\beta u_{x}+\beta^2u$,
\item $K(x)=(x+\gamma)^{\alpha}$, $V=3\frac{u_{x}^2}{u}+\frac{3-3\alpha}{x+\gamma}u_{x}+\frac{(\alpha-2)(\alpha-1)}{(x+\gamma)^2}u$,
\item $K(x)=(x+\gamma)^{\alpha}$, $V=3\frac{u_{x}^2}{u}+\frac{1-3\alpha}{x+\gamma}u_{x}+\frac{(\alpha-2)\alpha}{(x+\gamma)^2}u$,
\item $K(x)=1$, $V=3\frac{u_{x}^2}{u}+k_1u_{x}u+k_2u^3+k_3xu^4+k_4u^4$,
\item $K(x)=1$, $V=3\frac{u_{x}^2}{u}+\frac{k_2}{k_2x+k_3}u_{x}+k_1(uu_{x}+\frac{k_2}{k_2x+k_3}u^2)+\frac{k_4}{k_2x+k_3}u^3$,
\item $K(x)=(x+\gamma)^{-\frac{1}{2}}$, $V=3\frac{u_{x}^2}{u}+\frac{5}{2(x+\gamma)}u_{x}+\frac{5}{4(x+\gamma)^2}u+k_1(x+\gamma)^{\frac{5}{2}}u^4+k_2(x+\gamma)^{\frac{1}{2}}u^4$,
\item $K(x)=(x+\gamma)^{-1}$, $V=3\frac{u_{x}^2}{u}+\frac{2(3k_2(x+\gamma)^2+2k_1)}{(x+\gamma)(k_2(x+\gamma)^2+k_1)}u_{x}+\frac{3(2k_2(x+\gamma)^2+k_1)}{(k_2(x+\gamma)^2+k_1)(x+\gamma)^2}u+\frac{k_3(x+\gamma)^2u^3}{k_2(x+\gamma)^2+k_1}$, $k_1\neq0 \vee k_2\neq 0$,
\item $K(x)=(x+\gamma)^{-\frac{3}{2}}$, $V=3\frac{u_{x}^2}{u}+\frac{15}{2(x+\gamma)}u_{x}+\frac{35}{4(x+\gamma)^2}u+k_1(x+\gamma)^{\frac{9}{2}}u^4+k_2(x+\gamma)^{\frac{5}{2}}u^4$,
\item $K(x)=(x+\gamma)^{-2}$, $V=3\frac{u_{x}^2}{u}+\frac{10k_2(x+\gamma)+9k_3}{(x+\gamma)(k_2(x+\gamma)+k_3)}u_{x}+\frac{3(5k_2(x+\gamma)+4k_3)}{(x+\gamma)^2(k_2(x+\gamma)+k_3)}u$\\
$+k_1\big((x+\gamma)uu_{x}+\frac{3k_2(x+\gamma)+2k_3}{(k_2(x+\gamma)+k_3)}u^2\big)+\frac{k_4(x+\gamma)^3}{k_2(x+\gamma)+c3}u^3$, $k_2\neq0 \vee k_3\neq 0$,
\item $K(x)=(x+\gamma)^{-2}$, $V=3\frac{u_{x}^2}{u}+\frac{10}{x+\gamma}u_{x}+\frac{15}{(x+\gamma)^2}u+k_1\big((x+\gamma)uu_{x}+3u^2\big)+k_2(x+\gamma)^{5}u^4$\\
$+k_3(x+\gamma)^{4}u^4+k_4(x+\gamma)^{2}u^3$,
\item $K(x)=(x+\gamma)^{-2}$, $V=3\frac{u_{x}^2}{u}+\frac{7k_1+10k_2(x+\gamma)}{(x+\gamma)(k_1+k_2(x+\gamma))}u_{x}+\frac{15k_2^2(x+\gamma)^2+21k_1k_2(x+\gamma)+8k_1^2}{(x+\gamma)^2(k_2(x+\gamma)+k_1)^2}u$, $k_1\neq0 \vee k_2\neq 0$,
\end{enumerate}
where $\alpha$, $\beta$, $\gamma$, $k_i$, $i=1,2,3,4$, are all constants.
\end{prop}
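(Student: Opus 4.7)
The plan is to reduce the determining equation $X^{(2)}(u_{xx}-V)|_M=0$ to an algebraic identity in $x,u,u_x$, substitute the power-series ansatz for $V$, and resolve the resulting overdetermined ODE system for $K(x)$ and the coefficients $A_{ij}(x)$ branch by branch.

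First I would write
\[
\eta=\Bigl(\frac{K}{u}\Bigr)_{xx}=\frac{K''}{u}-\frac{2K'u_x}{u^2}-\frac{Ku_{xx}}{u^2}+\frac{2Ku_x^2}{u^3}
\]
and prolong to second order. Because $V$ does not depend on $u_{xx}$, the determining condition collapses to
\[
D_x^2\eta-V_u\,\eta-V_{u_x}\,D_x\eta=0\quad\text{on }M,
\]
and the reduction modulo $M$ simply replaces every occurrence of $u_{xx}$ by $V$ and of $u_{xxx}$ by $D_xV$ in $D_x\eta$ and $D_x^2\eta$. The result is an identity in the algebraically independent variables $x,u,u_x$ whose coefficients are rational in $K,K',K''$ and polynomial in $V$ and its partial derivatives.

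Next, substituting $V=\sum_{i,j\in\mathbb Z}A_{ij}(x)u^iu_x^j$ and collecting monomials $u^iu_x^j$ turns the determining identity into an infinite system of ODEs in $x$ for $K$ and the $A_{ij}$. The top-order Laurent term $2Ku_x^2/u^3$ in $\eta$ is unavoidable, and its interaction with $V_u\eta$ and $V_{u_x}D_x\eta$ forces the common leading summand $3u_x^2/u$ in $V$ that appears in every case $a)$--$k)$. With this fixed, the coefficients of $u_x^1$ and $u^1$ give a linear subsystem tying $A_{1,0}$ and $A_{0,1}$ to $K,K',K''$; solving it pins $K$ to either an exponential $\mathrm{e}^{\beta x}$ or a power $(x+\gamma)^\alpha$ and yields the pure-linear cases $a)$--$c)$. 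The remaining cases $d)$--$k)$ come from switching on the higher-degree coefficients $A_{1,1}$, $A_{3,0}$, $A_{4,0}$: each activation forces additional algebraic relations that fix $\alpha$ to a specific value in $\{-\tfrac12,-1,-\tfrac32,-2\}$ or sends $K$ to a constant, and introduces the free parameters $k_1,\dots,k_4$ as integration constants of the newly nonzero ODEs.

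The main obstacle is not any individual calculation but the combinatorial bookkeeping: the overdetermined system couples $K$ nonlinearly with several $A_{ij}$ through derivatives, so the enumeration must be organised carefully (for example lexicographically by the pair $(j,i)$ indexing the highest nonzero $A_{ij}$) to avoid omissions or duplications. Verification of the final list is, by contrast, mechanical: for each $(K,V)$ in $a)$--$k)$ one substitutes back into the reduced determining equation and checks that every $u^iu_x^j$-coefficient vanishes identically. Following the authors, we suppress the lengthy but routine coefficient manipulation and record only the resulting classification.
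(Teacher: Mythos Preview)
Your proposal is correct and follows essentially the same approach as the paper: write the determining equation $X^{(2)}(u_{xx}-V)|_M=0$, substitute the power-series ansatz $V=\sum A_{ij}(x)u^iu_x^j$, split into an overdetermined system in the $A_{ij}$ and $K$, and solve branch by branch. In fact your sketch is more detailed than what the paper provides---the authors explicitly say ``due to a great number of determining equations and cases to consider we will move past the explanation of calculations and present the results,'' whereas you at least indicate how the common leading term $3u_x^2/u$ is forced and how the exponential/power dichotomy for $K$ arises from the linear subsystem.
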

It must be noted that because of the said restrictions this is not a complete classification.

Solutions of ordinary differential equations is used as ansatzes which produce a reduction of the equation $u_t=(\frac{K}{u})_{xx}$ to a system of two ODEs. We use the fact that the equation{\samepage
\[u_t=\left(\frac{K}{u}\right)_{xx}+\eta(x,u,u_x)\]
 also allows reduction with the same ansatz, for $\eta$ such that $\eta\frac{\partial}{\partial u}$ is the symmetry of the ODE.}

We have considered separate cases depending on the heterogeneity of the medium $K(x)$ (homogeneity for $K={\rm const}$) and the ODE (which is not unique for the choice of $K$), calculated the full contact symmetry and the reduced equations. From now on $A_i$ are arbitrary smooth functions on their two arguments.
\begin{enumerate}[(i)]
\item When $K(x)={\rm e}^{\beta x}$ and
\begin{equation}
u_{xx}=3\frac{u_x^2}{u} -3\beta u_x +\beta^2 u
\label{beta2}
\end{equation}
 we obtain partial differential equation
\begin{equation}
u_t=\left(\frac{{\rm e}^{\beta x}}{u}\right)_{xx}+
\sum_{i=1}^2
{{\rm e}^{-i\beta x}}u^3
A_i
\left(
2\frac{{\rm e}^{\beta x}}{u^3}\Big(u-\frac{u_x}{\beta}\Big)
,
\frac{{\rm e}^{2\beta x}}{u^3}\Big(2\frac{u_x}{\beta}-u\Big)
\right).
\label{ubeta}
\end{equation}
From what already has been shown if follows that the ansatz
\begin{equation}
\label{an}
u(x,t)=\frac{\pm {\rm e}^{\beta x}}{\sqrt{\varphi_1(t){\rm e}^{\beta x}+\varphi_2(t)}},
\end{equation}
which is general solution of \eqref{beta2},
reduces equation \eqref{ubeta} to the system of ordinary differential equations
\begin{gather*}
\varphi_1'+\frac{\beta^2}{2}\varphi_1^2+2A_1(\varphi_1,\varphi_2)=0,\nonumber\\
\varphi_2'+\beta^2\varphi_1\varphi_2+2A_2(\varphi_1,\varphi_2)=0,\label{ree}
\end{gather*}

 \item when $K(x)=x^{\alpha},~\alpha\neq 0$, and $u_{xx}=3\frac{u_x^2}{u} +\frac{3-3\alpha}{x} u_x+ \frac{(\alpha-2)(\alpha-1)}{x^2}u$ we obtain, in a similar way, the equation
\begin{equation}
u_t=\left(\frac{x^{\alpha}}{u}\right)_{xx}+
\sum_{i=1}^2
x^{2-i\alpha}u^3
A_i
\left(
\frac{2x^{\alpha-2}}{\alpha u^3}\big((\alpha-1)u-xu_x\big)
,
\frac{x^{2\alpha-2}}{\alpha u^3}\big((2-\alpha)u+2xu_x\big)
\right)
\label{xa}
\end{equation}
and the ansatz
\begin{equation}
u(x,t)=\frac{\pm x^{\alpha-1}}{\sqrt{\varphi_1(t)x^{\alpha}+\varphi_2(t)}}.
\label{xaa}
\end{equation}
Substituting \eqref{xa} into \eqref{xaa} yields the reduced system
\begin{gather*} \varphi_1'+\frac{\alpha(\alpha+1)}{2}\varphi_1^2+2A_1(\varphi_1,\varphi_2)=0,\\
 \varphi_2'+\alpha(\alpha+1)\varphi_1\varphi_2+2A_2(\varphi_1,\varphi_2)=0.
 \end{gather*}

Analysis similar to that in cases (i) and (ii) gives the following results:
 \item when $H(x)=x^{\alpha}$, $\alpha\neq -2$ and $u_{xx}=3\frac{u_x^2}{u} +\frac{1-3\alpha}{x}u_x +\frac{(\alpha-2)\alpha}{x^2}u$, the equation
\begin{gather*}
u_t=\left(\frac{x^{\alpha}}{u}\right)_{xx}+ \frac{u^3}{x^{\alpha-2}} A_1\left(\frac{2x^{\alpha-2}}{(a+2)u^3}\big(\alpha u-xu_x\big),\frac{x^{2\alpha}}{(a+2)u^3}\big((2-\alpha)u+2xu_x\big)\right)
\\
\hphantom{u_t=}{} +
\frac{u^3}{x^{2\alpha}}A_2\left(\frac{2x^{\alpha-2}}{(\alpha+2)u^3}\big(\alpha u-xu_x\big),\frac{x^{2\alpha}}{(\alpha+2)u^3}\big((2-\alpha)u+2xu_x\big)\right),
\\
u_t=\left(\frac{x^{\alpha}}{u}\right)_{xx}+
\sum_{i=1}^2
x^{4-i(\alpha+2)}u^3
A_i
\left(
\frac{2x^{\alpha-2}}{(\alpha+2)u^3}\big(\alpha u-xu_x\big)
,
\frac{x^{2\alpha}}{(\alpha+2)u^3}\big((2-\alpha)u+2xu_x\big)
\right)
\end{gather*}
admits the ansatz
\[u(x,t)=\frac{\pm x^{\alpha}}{\sqrt{\varphi_1(t)x^{\alpha+2}+\varphi_2(t)}},\]
for which it is reduced to the system
\begin{gather*} \varphi_1'+\frac{\alpha(\alpha+1)}{2}\varphi_1^2+2A_1(\varphi_1,\varphi_2)=0,\\
\varphi_2'+(\alpha+1)(\alpha+2)\varphi_1\varphi_2+2A_2(\varphi_1,\varphi_2)=0,\end{gather*}

 \item when $K(x)=x^{-2}$ and $u_{xx}=3\frac{u_x^2}{u} +\frac{7}{x} u_x+ \frac{8}{x^2}u$,
\begin{gather*}
u_t=\big(\frac{1}{x^2u}\big)_{xx}+
x^4\ln(x)u^3
A_1
\left(
\frac{-2(xu_x+2u)}{x^4u^3}
,
\frac{2x\ln(x) u_x+(4\ln(x)+1)u}{x^4u^3}
\right)\\
\hphantom{u_t=}{} +
x^4u^3
A_2
\left(
\frac{-2(xu_x+2u)}{x^4u^3}
,
\frac{2x\ln(x) u_x+(4\ln(x)+1)u}{x^4u^3}
\right)
\end{gather*}
the equation
\[
u_t=\left(\frac{1}{x^2u}\right)_{xx}+
\sum_{i=1}^2
x^4\ln(x)^{2-i}u^3
A_i
\left(
-2\frac{xu_x+2u}{x^4u^3}
,
\frac{2x\ln x u_x+(4\ln x+1)u}{x^4u^3}
\right)
\]
admits the ansatz
\[u(x,t)=\frac{\pm 1}{x^2\sqrt{\varphi_1(t)\ln(x)+\varphi_2(t)}},\]
for which it is reduced to the system
\begin{gather*} \varphi_1'-\varphi_1^2+2A_1(\varphi_1,\varphi_2)=0,\\
 \varphi_2'-\tfrac{1}{2}\varphi_1^2-\varphi_1\varphi_2+2A_2(\varphi_1,\varphi_2)=0,\end{gather*}
 \item when $K(x)=x^{-2}$ and $u_{xx}=3\frac{u_x^2}{u} +\frac{10}{x} u_x+ \frac{15}{x^2}u$,
\begin{gather*}
u_t=\left(\frac{1}{x^2u}\right)_{xx}+
x^6u^3
A_1
\left(
-\frac{5u+2xu_x}{x^6u^3}
,
2\frac{3u+xu_x}{x^5u^3}
\right)\\
\hphantom{u_t=}{}
+
x^5u^3
A_2
\left(
-\frac{5u+2xu_x}{x^6u^3}
,
2\frac{3u+xu_x}{x^5u^3}
\right)
\end{gather*}
the equation
\[
u_t=\left(\frac{1}{x^2u}\right)_{xx}+
\sum_{i=1}^2
x^{7-i}u^3
A_i
\left(
-\frac{5u+2xu_x}{x^6u^3}
,
2\frac{3u+xu_x}{x^5u^3}
\right)
\]
admits the ansatz
\[u(x,t)=\frac{\pm 1}{x^2\sqrt{\varphi_1(t)x^2+\varphi_2(t)x}},\]
for which it is reduced to the system
\begin{gather*} \varphi_1'-\tfrac{1}{2} \varphi_1^2+ 2 A_1(\varphi_1,\varphi_2)=0,\\
 \varphi_2'+2A_2(\varphi_1,\varphi_2)=0.\end{gather*}
In case of homogeneous medium we have:

 \item $K(x)=k={\rm const}$ and $u_{xx}=3\frac{u_x^2}{u} +\frac{3}{x} u_x+\frac{2}{x^2} u$,
\begin{gather*}
u_t=\left(\frac{k}{u}\right)_{xx}+
x^2\ln(x)u^3
A_1
\left(
-2\frac{u+xu_x}{x^2u^3}
,
\frac{(2\ln(x)+1)u+2x\ln(x)u_x}{x^2u^3}
\right)\\
\hphantom{u_t=}{} +
x^2u^3
A_2
\left(
-2\frac{u+xu_x}{x^2u^3}
,
\frac{(2\ln(x)+1)u+2x\ln(x)u_x}{x^2u^3}
\right)
\end{gather*}
the equation
\[
u_t=\left(\frac{k}{u}\right)_{xx}+
\sum_{i=1}^2
x^2(\ln x )^{2-i}u^3
A_i
\left(
-2\frac{u+xu_x}{x^2u^3}
,
\frac{(2\ln x+1)u+2x\ln x u_x}{x^2u^3}
\right)
\]
admits the ansatz
\[u(x,t)=\frac{\pm 1}{x\sqrt{\varphi_1(t)\ln x+\varphi_2(t)}},\]
for which it is reduced to the system
\begin{gather*} \varphi_1'+k\varphi_1^2+ 2 A_1(\varphi_1,\varphi_2)=0,\\
 \varphi_2'-\frac{k}{2}\varphi_1^2+h\varphi_1\varphi_2+2A_2(\varphi_1,\varphi_2)=0.\end{gather*}
\end{enumerate}
For arbitrary $A_1$, $A_2$ the modified equations have only one symmetry operator $\partial_t$, therefore whenever $\varphi_1'\neq0$ or $\varphi_2'\neq0$, the solution will not be an invariant one. Those reduced equations in fact exist, we will show that in the next chapter on the example of point symmetry.
We obtained a class of equations which describe the diffusion process in nonhomogeneous medium, with additional terms for which the method can be applied. In special case, where we have a point symmetry, these additional terms represent nonlinear nonhomogeneous sources and convection processes. In general $A_i$ may depend on parametric variable $t$ and the corresponding diffusion equation may be used for simulation of nonstationary media.

It is obvious that the method is applicable to ordinary differential equation
$(\frac{K}{u})_{xx}+\eta(x,u,u_x)=0$.
The corresponding ansatz reduces ordinary differential equation to system of algebraic (not differential) equations in this case. For example ansatz \eqref{an} reduces ordinary differential equation
\begin{equation*}
\left(\frac{{\rm e}^{\beta x}}{u}\right)_{xx}+
\sum_{i=1}^2
{{\rm e}^{-i\beta x}}u^3
A_i
\left(
2\frac{{\rm e}^{\beta x}}{u^3}\left(u-\frac{u_x}{\beta}\right)
,
\frac{{\rm e}^{2\beta x}}{u^3}\left(2\frac{u_x}{\beta}-u\right)
\right)=0
\end{equation*}
to the system of two algebraic equations
\begin{gather}
\label{rea}
\frac{\beta^2}{2}\varphi_1^2+2A_1(\varphi_1,\varphi_2)=0,\quad
\beta^2\varphi_1\varphi_2+2A_2(\varphi_1,\varphi_2)=0.
\end{gather}
One can easily choose such $A_1$, $A_2$ that the system \eqref{rea} will not have solutions.
In general, the method ensures the reduction of ordinary differential equations to system of algebraic equations but does not
guarantee the existence of even one solution of reduced system and consequently solution of overdetermined system given by equation under study and ordinary differential equation possessing the corresponding Lie--B\"acklund symmetry.

\section{Construction of solutions of nonlinear diffusion equations\\ via generalized symmetry method}

We modify our original diffusion equation by some selected characteristics of point symmetries representing some physical properties of nonlinearity on nonhomogeneity.
Let's start with the equation
\begin{equation}
u_{xx}=3\frac{u_x^2}{u}-3\beta u_x+\beta^2 u,\quad\beta\neq 0.
\label{zwy3}
\end{equation}
The ansatz is
\begin{equation}
\label{ans3}
u(x,t)=\frac{{\rm e}^{\beta x}}{\sqrt{\varphi_1(t){\rm e}^{\beta x}+\varphi_2(t)}}.
\end{equation}
We will consider equation
\begin{equation}
u_t=\left(\frac{\mathrm{e}^{\beta x}}{u}\right)_{xx}+
a_1u+a_2u_x+a_3u^3\mathrm{e}^{-\beta x}+a_4u^3\mathrm{e}^{-2\beta x}+a_5\big(\beta u\mathrm{e}^{-\beta x}-\mathrm{e}^{-\beta x}u_x\big),\quad a_1,a_2,\dots,a_5={\rm const},
\label{a1a5}
\end{equation}
which is obtained from \eqref{ubeta} by letting
\begin{gather*}
A_1=-a_1I_1-a_2\frac{\beta}{2}I_1+a_3,\\
A_2=-a_1I_2 + a_2\beta I_2+ a_4+ a_5\frac{\beta}{2}I_1,
\end{gather*}
where $I_1=2\frac{{\rm e}^{\beta x}}{u^3}\big(u-\frac{u_x}{\beta}\big)$, $I_2=\frac{{\rm e}^{2\beta x}}{u^3}\big(2\frac{u_x}{\beta}-u\big)$.

Note, that equation \eqref{zwy3} can be linearized using a point change of variables, but we don't do such thing, because it would change also the term $\big(\frac{\mathrm{e}^{\beta x}}{u}\big)_{xx}$. The property of reduction for equation \eqref{a1a5} is not limited to linear ODEs, it is also valid for nonlinear ODEs. For example, Korteweg--de~Vries equation admits reduction with an ansatz obtained from a solution to an ODE which is an arbitrary linear combination of higher-order symmetries. One can also use any linear combination of higher-order symmetries of KdV to produce the ansatz.
 After substituting the ansatz \eqref{ans3} into \eqref{a1a5} we obtain reduction equations:
 \begin{gather} \label{red1}
2\varphi_1'+\beta^2\varphi_1^2+2(2a_1+\beta a_2)\varphi_1+4a_3=0,\\
 \label{red2}
2\varphi_2'+2\beta^2 \varphi_1\varphi_2+4a_1\varphi_2+4\beta a_2 \varphi_2+4 a_4+2\beta a_5\varphi_1=0.
\end{gather}
Now, the evolutionary equation \eqref{a1a5} admits the following symmetry operators:
\begin{gather*}
Y_1=\partial_t,\\
Y_2=\frac{1}{\beta}\partial_x+\frac{u}{2}\partial_u, \text{ if }a_4 = a_5=0,\\
Y_2=t\partial_t+\frac{2}{\beta}\partial_x+\frac{3}{2} u\partial_u, \text{ if }a_1=a_2=a_3=a_5=0,~a_4\neq 0,\\
Y_2=t\partial_t+\frac{1}{\beta}\partial_x+u\partial_u, \text{ if }a_1=a_2=a_3=a_4 = 0,~a_5\neq 0,\\
Y_3=\mathrm{e}^{(a_2\beta+2a_1)t}\partial_t-a_2\mathrm{e}^{(a_2\beta+2a_1)t}\partial_x+a_1u\mathrm{e}^{(a_2\beta+2a_1)t}\partial_u, \text{ if }a_3 = a_4 = a_5=0,~a_2\beta+2a_1\neq0,\\
Y_3=t\partial_t-a_2t\partial_x+\left(a_1t+\frac{1}{2}\right)u\partial_u \text{ if }a_3=a_4=a_5=0,~a_2\beta+2a_1=0 .\end{gather*}
Depending on the choice of $a_i$ the reduced system is integrable and we can construct the solutions to the modified evolutionary equation.

For $u_t=\Big(\frac{\mathrm{e}^{\beta x}}{u}\Big)_{xx}+a_1u+a_2u_x+a_3u^3\mathrm{e}^{-\beta x}+a_4u^3\mathrm{e}^{-2\beta x}+a_5(\beta u\mathrm{e}^{-\beta x}-\mathrm{e}^{-\beta x}u_x)$, $a_2\neq0
$, $\gamma=\pm\sqrt{(a_2\beta+2a_1)^2- 4\beta^2a_3}$, $\gamma\neq 0$, $\delta =\frac{ a_1 a_5 - a_4 \beta + a_2 a_5 \beta}{a_1^2 + a_1 a_2 \beta -
 a_3 \beta^2}$, $a_1^2 + a_1 a_2 \beta -
 a_3 \beta^2\neq 0$ the reduced system \eqref{red1}--\eqref{red2} has the following solution:
\begin{gather*} \varphi_1(t)=\frac{\gamma\tanh\big(\frac{\gamma}{2}(t+s_1)\big)-\beta a_2-2 a_1}{\beta^2},\\
\varphi_2(t)=\frac{s_2\mathrm{e}^{-\beta a_2t}-(2a_5+\beta a_2 \delta)\cosh(\gamma(t+s_1))
+\delta\gamma\sinh(\gamma(t+s_1))+\frac{4}{\beta a_2}(a_1a_5-\beta a_4)+2a_5
}
{4\beta\cosh^2\big(\frac{\gamma}{2}(t+s_1)\big)}.
\end{gather*}
Substituting $\varphi_1$, $\varphi_2$ into \eqref{ans3} gives the solution of \eqref{a1a5} with the restrictions above.

The construction of solutions in the rest of the cases runs as before.

For $u_t=\big(\frac{\mathrm{e}^{\beta x}}{u}\big)_{xx}+a_1u+a_3u^3\mathrm{e}^{-\beta x}+a_4u^3\mathrm{e}^{-2\beta x}+a_5(\beta u\mathrm{e}^{-\beta x}-\mathrm{e}^{-\beta x}u_x)$, $\gamma=\pm2\sqrt{a_1^2-\beta^2a_3}$, $\gamma\neq 0$:
\begin{gather*} \varphi_1(t)=\frac{\gamma}{\beta^2}\tanh(\frac{\gamma}{2}(t+s_1))-\frac{2a_1}{\beta^2},\\
 \varphi_2(t)=\frac{s_2+(a_1a_5-\beta a_4)\big(\gamma(t+s_1)+\sinh(\gamma(t+s_1))\big)}{\beta\gamma\cosh^2\big(\frac{\gamma}{2}(t+s_1)\big)}-\frac{a_5}{\beta};
\end{gather*}
for $u_t=\big(\frac{\mathrm{e}^{\beta x}}{u}\big)_{xx}+a_1u+a_2u_x+\big(\frac{a_2}{2}+\frac{a_1}{\beta}\big)^2u^3\mathrm{e}^{-\beta x}+a_4u^3\mathrm{e}^{-2\beta x}+a_5(\beta u\mathrm{e}^{-\beta x}-\mathrm{e}^{-\beta x}u_x)$, $a_2\neq 0$:
\begin{gather*} \varphi_1(t)=\frac{2}{\beta^2(t+s_1)}-\frac{a_2\beta+2 a_1}{\beta^2},\\
 \varphi_2(t)=\frac{s_2\mathrm{e}^{-\beta a_2 t}+4(a_5(a_2\beta+a_1)-\beta a_4)(1-\beta a_2(t+s_1))}{\beta^4a_2^3(t+s_1)^2}+\frac{a_5(a_2\beta+2a_1)-2\beta a_4}{\beta^2 a_2};
 \end{gather*}
for $u_t=\big(\frac{\mathrm{e}^{\beta x}}{u}\big)_{xx}+a_1u+\frac{a_1^2}{\beta^2}u^3\mathrm{e}^{-\beta x}+a_4u^3\mathrm{e}^{-2\beta x}+a_5(\beta u\mathrm{e}^{-\beta x}-\mathrm{e}^{-\beta x}u_x)$:
\begin{gather*} \varphi_1(t)=\frac{2}{\beta^2(t+s_1)}-\frac{2 a_1}{\beta^2},\\
 \varphi_2(t)=\frac{s_2}{(t+s_1)^2}+\frac{2(a_1a_5-a_4)}{3\beta}(t+s_1)-\frac{a_5}{\beta};
 \end{gather*}
and for $u_t=\big(\frac{\mathrm{e}^{\beta x}}{u}\big)_{xx}-\beta a_2u+a_2u_x+a_4u^3\mathrm{e}^{-2\beta x}+a_5(\beta u\mathrm{e}^{-\beta x}-\mathrm{e}^{-\beta x}u_x)$, $a_2\neq0$:
\begin{gather*}\varphi_1(t)=\frac{2a_2}{1+\mathrm{e}^{-a_2\beta(t+s_1)}},\\
\varphi_2(t)=-\frac{
2(2a_4+a_2a_5)\mathrm{e}^{a_2\beta(t+s_1)}+(a_4+a_2a_5)\mathrm{e}^{2a_2\beta(t+s_1)}+2 a_2a_4t+s_2
}{a_2(1+\mathrm{e}^{a_2\beta(t+s_1)})^2}.
\end{gather*}
When $\exists_{i\in\{4,5\}}\colon a_i\neq 0$ and $\exists_{j\in\{1,2,3,4,5\},j\neq i}\colon a_j\neq 0$ the equation \eqref{a1a5} admits only one symmetry opera\-tor,~$\partial_t$. The presented solutions are clearly not invariant under translations of the variable $t$.
When exactly one of the constants $a_4$, $a_5$ is nonzero and $a_1=a_2=a_3=0$ or $a_3$ is nonzero and both~$a_4$ and~$a_5$ are zeros, equation~\eqref{a1a5} admits exactly two symmetry operators and invariance under one-parameter symmetry group with the generator $\alpha_1Y_1+\alpha_2Y_2$ must be verified
 from the definition, that is solution $u=u(x,t)$ is invariant when there exist real numbers $\alpha_1$, $\alpha_2$, at least one nonzero, that $(\alpha_1Y_1+\alpha_2Y_2)(u-u(x,t))\big|_{u=u(x,t)}=0$. Otherwise, the solution is not invariant.
Instead of checking the invariance by the definition, we will compare them with the invariant solutions in the class \eqref{ans3}. Functions $\varphi_i$ for invariant solutions are as follows.

For $a_3\neq 0$, $a_4=a_5=0$
\[\varphi_1=c_1,\quad \varphi_2=c_2~{\rm exp}\left(\frac{\alpha_2}{\alpha_1}t\right),\]
for $a_4\neq 0$, $a_1=a_2=a_3=a_5=0$
\[\varphi_1=\frac{c_1}{\alpha_1+\alpha_2t},\quad \varphi_2=\frac{c_2}{\alpha_1+\alpha_2t},\]
and for $a_5\neq 0$, $a_1=a_2=a_3=a_4=0$
\[\varphi_1=\frac{c_1}{\alpha_1+\alpha_2t},\quad\varphi_2=c_2,\]
where $c_1$, $c_2$, $\alpha_1$, $\alpha_2$ are all constants. By plain comparison, in those 3 cases, none of the five solutions in general form obtained from the reduction equations is invariant under $\alpha_1Y_1+\alpha_2Y_2$ ($Y_2$ depending on the choice of nonzero $a_i$).

From the number of the symmetry operators we can already establish some cases in which the solution will be or not be invariant under the symmetry group. We note that the equation $u_{xx}=3\frac{u_x^2}{u}-3\beta u_x+\beta^2 u$, $\beta\neq 0$ admits the all the symmetry operators equation \eqref{a1a5} does, among others. Whenever $a_3=a_4=a_5=0$, the evolutionary equation~\eqref{a1a5} admits a 3-dimensional symmetry group and since our ansatz solution consists of only 2 constants, the solution must be invariant under that group. On the other hand, when $a_4\neq 0$ or $a_5\neq 0$ the dimension of the symmetry group is smaller than the number of constants in the ansatz (equation admits only one translation operator), therefore the solution will definitely not be invariant. The cases where the dimension of the symmetry group matches the order of the ODE, in that case equal to 2, must be analyzed. In many cases it might be easier to compare invariant solutions arising from the form of the ansatz with the solutions produced via the reduction equations, instead of directly substituting the solutions into invariance equations.

One does not always have to solve the reduction equations to determine if the solution is or isn't invariant. Let's for example take equation
\begin{equation}
\label{a6a7a8}
u_t=\left(\frac{\mathrm{e}^{\beta x}}{u}\right)_{xx}+\mathrm{e}^{\beta x}\left(\frac{\beta}{2}u -u_x\right)\big(a_6+\mathrm{e}^{\beta x}u^{-2}a_7\big) +a_8\left(\frac{\mathrm{e}^{\beta x}}{u}\right)_{x}, \quad a_i={\rm const}, \quad i=6,7,8,
\end{equation}
which is obtained from \eqref{ubeta} by letting
\begin{gather*}
A_1=-a_6\frac{\beta}{2}I_2-a_7\frac{\beta}{2}I_1I_2+a_8\frac{\beta}{2}I_1^2,\\
A_2=-a_7\frac{\beta}{2}I_2^2+a_8\frac{\beta}{2}I_1I_2.
\end{gather*}

It admits the same ansatz as in the previous example,
\begin{equation}\label{ansatzE}
u(x,t)=\frac{{\rm e}^{\beta x}}{\sqrt{\varphi_1(t){\rm e}^{\beta x}+\varphi_2(t)}},
\end{equation}
and is reduced to a system
\begin{gather*} \varphi_1'-\beta a_6\varphi_2-\beta a_7\varphi_1\varphi_2+\beta\big(a_8+\tfrac{1}{2}\beta\big)\varphi_1^2=0,\\
 \varphi_2'-\beta a_7 \varphi_2^2+\beta(a_8+\beta)\varphi_1\varphi_2=0.\end{gather*}
The equation \eqref{a6a7a8} possesses three symmetry operators when $a_6=a_7=0$
and only one symmetry operator ($\partial_t$) when both $a_6\neq 0$ and $a_7\neq 0$.

Let's consider the case $a_6=0$, $a_7\neq 0$ with two-dimensional Lie algebra. Here
$Q_1=\partial_t$, $Q_2=t\partial_t+\frac{1}{2}u\partial_u$ is the basis of the algebra for equation \eqref{a6a7a8} (with $a_6=0$).
Invariance criterion in terms of functions $\varphi_i$ after splitting with respect to the powers of $\mathrm{e}^{\beta x}$ is
\[(\alpha_1+\alpha_2t)\varphi_1'+\alpha_2\varphi_1=0,\quad (\alpha_1+\alpha_2t)\varphi_2'+\alpha_2\varphi_2=0.\]
Reduction equations do have an explicit solution but it is invariant, because $\varphi_1=0$. They also have an implicit solution. At this point all we need is to solve the reduction equations for the derivatives of $\varphi_i$ and substitute those to the system above. The result is another system,
\begin{gather*} \varphi_1\big(\beta(\alpha_1+\alpha_2t)(a_7\varphi_2-a_8\varphi_1-\tfrac{1}{2}\beta\varphi_1)+\alpha_2\big)=0,\\
 \varphi_2\big(\beta(\alpha_1+\alpha_2t)(a_7\varphi_2-a_8\varphi_1-\beta\varphi_1)+\alpha_2\big)=0.\end{gather*}
The two equations are very similar. After dividing the $i$-th equation by $\beta\varphi_1\varphi_i$ and subtracting one from another we have
\[\tfrac{1}{2}(\alpha_1+\alpha_2t)=0,\]
meaning
\[\alpha_1=\alpha_2=0,\]
so the solution \eqref{ansatzE}, where both $\varphi_i$ are nonzero solutions of the reduction equations, wouldn't be an invariant solution.

Now we will find the solution to the system of reduction equation. But firstly, we will show how to find a symmetry of the reduced equations having the symmetry of the partial differential equation. There is a well know theorem for inherit symmetries in a classical case of Lie groups of point transformations. It says that we can obtain a symmetry group of reduced equations if we look for invariant solutions with respect to some subgroup, which is a normalizer of a group $G$ admitted by PDE. When the ansatz is searched for using ODEs of higher order, this condition changes.

We consider a case, when the symmetry operator $Q$ of the PDE is admitted also by the ODE \eqref{zwy3} (we treat the ODE as a PDE that does not include the derivative $u_t$) and condition
\begin{equation}\label{inher1}
Q^{(1)}I_1=F_1(x,u,u_x)=f_1(I_1,I_2), \quad
Q^{(1)}I_2=F_2(x,u,u_x)=f_2(I_1,I_2)
\end{equation}
holds, where $Q^{(1)}$ is the first prolongation of operator $Q$, $f_1(I_1,I_2)$, $f_2(I_1,I_2)$ are arbitrary smooth functions,
\[I_1=\varphi_1={\rm e}^{\beta x}\left\{\frac{2}{u^2}-\frac{2}{\beta}\frac{u_x}{u^3}\right\},\quad
I_2=\varphi_2={\rm e}^{2\beta x}\left\{\frac{2}{\beta}\frac{u_x}{u^3}-\frac{1}{u^2}\right\},\]
 are the first integrals of equation \eqref{zwy3}.
For a independent variable we impose the condition
\begin{equation}\label{inher2}
Qt=m(t),
\end{equation}
where $m(t)$ is arbitrary smooth function. One can construct the symmetry operator for reduced system in the form
 $ \tilde{Q} = m(t)\partial_t+f_1(\varphi_1, \varphi_2)\partial_{\varphi_1}+f_2(\varphi_1, \varphi_2)\partial_{\varphi_2}$ if
 the conditions \eqref{inher1}, \eqref{inher2} are fulfilled. Obviously, we obtain a nontrivial symmetry for a system of reduced equations, if $f_1$, $f_2$ and $m$ are not identically zeros.
It's pretty obvious that $Q_1$ i $Q_2$ satisfy all the conditions, namely for $Q_1$ we have $f_1=0$, $f_2=0$ and $m=1$, and for $Q_2$ we have $f_1=-I_1$, $f_2=-I_2$ and $m=t$.
Thus we conclude that the ODE system
\begin{gather*} \varphi_1'-\beta a_7\varphi_1\varphi_2+\beta\left(a_8+\tfrac{1}{2}\beta\right)\varphi_1^2=0,\\
 \varphi_2'-\beta a_7\varphi_2^2+\beta(a_8+\beta)\varphi_1\varphi_2=0\end{gather*}
is invariant with respect to 2-parameter Lie group of point transformations whose Lie algebra is given by basic elements
 \[X_1=\partial_t,\quad X_2=t\partial_t-\varphi_1\partial_{\varphi_1}-\varphi_2\partial_{\varphi_2},\]
which are obtained from $Q_1$, $Q_2$. It means that the system is solvable in quadratures.
A point transformations
\[T=\frac{\varphi_1}{\varphi_2},\quad W(T)=t,\quad Z(T)=\ln{\varphi_2}\]
maps the symmetry operators $X_1$, $X_2$ into
\[Y_1=\partial_W,\quad Y_2=W\partial_W-\partial_Z.\]
The transformed system can be simplified into
\[W_T=\frac{2}{{\rm e}^Z\beta^2T^2},\quad Z_T=\frac{2}{\beta T^2}\{-(a_8+\beta)T+a_7\}.\]
We can easily solve for $Z$,
\[Z(T)=\ln\left\{c T^{\frac{-2(a_8+\beta)}{\beta}}\exp \left(\frac{-2a_7}{\beta T}\right)\right\},\]
which in initial coordinates is an algebraic equation
\begin{equation}
\varphi_2=c \left(\frac{\varphi_1}{\varphi_2}\right)^{\frac{-2(a_8+\beta)}{\beta}}\exp \left(\frac{-2a_7}{\beta}\frac{\varphi_2}{\varphi_1}\right),\quad c={\rm const}.
\label{wz1}
\end{equation}
Solution
\begin{equation}\label{integral}
W(T)=2\frac{c}{\beta^2}\int T^{\frac{2a_8}{\beta}}\exp\left(\frac{2 a_7}{\beta T}\right){\rm d}T+c_0,\quad c_0, c={\rm const}
\end{equation}
 is trickier to utilize once going back to original coordinates.

If we were to use $W_T=\frac{2}{{\rm e}^Z\beta^2T^2}$, this equation would imply a differential equation
\begin{equation*}
\left(\frac{\varphi_1}{\varphi_2}\right)'=\frac{\beta^2}{2}\frac{\varphi_1^2}{\varphi_2}
\end{equation*}
that can be alternatively written as
\begin{equation}
\left(\frac{\varphi_2}{\varphi_1}\right)'=-\frac{\beta^2}{2}\varphi_2.
\label{wz3}
\end{equation}
The only invariant solution of the form $u(x,t)=\pm\frac{{\rm e}^{\beta x}}{\sqrt{\varphi_1 {\rm e}^{\beta x}+\varphi_2}}$ is the one where $\varphi_i=\frac{c_i}{\alpha_1+\alpha_2t}$, $c_i,\alpha_j={\rm const}$, $i,j=1,2$. In that case $\frac{\varphi_1}{\varphi_2}=\frac{c_1}{c_2}={\rm const}$, and it is visible that such $\varphi_i$ do not satisfy equations
\eqref{wz1}--\eqref{wz3}.

If we take $a_8=-\beta$, then
\[Z=\ln c -\frac{2a_7}{\beta T}\]
but most importantly the integral in \eqref{integral} can be easily calculated,
\[W=-\frac{1}{\beta c a_7}{\rm e}^{\frac{2a_7}{\beta T}}+c_0.\]
From this we obtain
\[-\frac{2a_7}{\beta T}=\ln \left(\frac{1}{\beta a_7 c\,(c_0-W)} \right).\]
Taking the equations above into account, we see that
\[{\rm e}^Z=\frac{1}{\beta a_7 (c_0-W)},\quad T=\frac{2a_7}{\beta\ln(\beta a_7 c\,(c_0-W))}.\]
Because $W=t$, $\varphi_2={\rm e}^Z$, $\varphi_1=T\varphi_2$, the solution of the reduced system in original coordinates is
\[\varphi_2=\frac{1}{\beta a_7 (c_0-t)},\quad \varphi_1=\frac{2}{\beta^2 (c_0-t)\ln(\beta a_7 c\, (c_0-t))}.\]
Solutions we constructed have this property that corresponding solutions $\varphi_1$, $\varphi_2$ are not zeros (identically). The least we can say that there exist $a_8$ such that the proposed method allows us to construct a solution which is not invariant (can't be obtained by classical Lie method).

In conclusion, we have found $\varphi_i$'s for which the solution $u(x,t)=u(x,\varphi_1(t),\varphi_2(t))$ is not invariant under the point symmetries of the evolution equation~\eqref{a6a7a8}.

Let's take equation
\begin{equation}
u_t=\left(\frac{{\rm e}^{\beta x}}{u}\right)_{xx}.
\label{gole}
\end{equation}
The basis of its Lie algebra of Lie point symmetries is
\begin{equation}
X_1=\partial_t, \quad X_2=2t\partial_t+u\partial_u,\quad X_3=2\partial_x+\beta u\partial_u.
\label{genera}
\end{equation}
The solution to reduction equation
\begin{equation}\label{red01}
\varphi_1'+\frac{1}{2}\beta^2\varphi_1^2=0,\quad
\varphi_2'+\beta^2\varphi_1\varphi_2=0
\end{equation}
is
\begin{equation}
\varphi_1=\frac{2}{\beta^2t+c_1},\quad \varphi_2=\frac{c_2}{(\beta^2t+c_1)^2},\quad c_1, c_2={\rm const}.
\label{r2}
\end{equation}
We call ansatz \eqref{ansatzE} together with solutions \eqref{r2} a partial solution of \eqref{gole}.

\begin{twr}Let $A_3$ be a Lie algebra of Lie point symmetries of \eqref{gole} with basis \eqref{genera}. A partial solution \eqref{ansatzE}, \eqref{r2} is invariant with respect to one-parameter Lie group with generator $\alpha_1X_1+\alpha_2X_2+\alpha_3X_3$, where $\alpha_1$, $\alpha_2$, $\alpha_3$ depend on $c_1$, $c_2$.
\end{twr}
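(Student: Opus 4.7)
The plan is to verify the invariance criterion directly from its definition: a one-parameter Lie group with generator $X=\alpha_1 X_1+\alpha_2 X_2+\alpha_3 X_3$ leaves the surface $u=u(x,t)$ invariant if and only if $X(u-u(x,t))\big|_{u=u(x,t)}=0$ identically in $(x,t)$. First I would substitute the basis elements from \eqref{genera} to rewrite the left-hand side as
\[(\alpha_2+\alpha_3\beta)\,u-(\alpha_1+2\alpha_2 t)\,u_t-2\alpha_3\,u_x,\]
and then compute the partial derivatives of the ansatz \eqref{ansatzE} via logarithmic differentiation of $u=\mathrm{e}^{\beta x}(\varphi_1\mathrm{e}^{\beta x}+\varphi_2)^{-1/2}$, which yields compact rational expressions for $u_x/u$ and $u_t/u$ in terms of $\varphi_i$ and $\varphi_i'$.

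Next I would divide the invariance relation by $u$, clear the common denominator by multiplying with $2(\varphi_1\mathrm{e}^{\beta x}+\varphi_2)$, and split the resulting identity into coefficients of $\mathrm{e}^{\beta x}$ and of $1$. This produces the pair of scalar conditions
\begin{align*}
2\alpha_2\varphi_1+(\alpha_1+2\alpha_2 t)\varphi_1' &=0,\\
2(\alpha_2-\alpha_3\beta)\varphi_2+(\alpha_1+2\alpha_2 t)\varphi_2' &=0,
\end{align*}
which must hold identically in $t$ once the explicit formulas \eqref{r2} are substituted for $\varphi_1$ and $\varphi_2$. These conditions are, up to a sign, essentially the defining system \eqref{red01} weighted by the factor $(\alpha_1+2\alpha_2 t)$, which suggests the computation will close.

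The last step is purely algebraic: after plugging $\varphi_1=2/(\beta^2 t+c_1)$ and $\varphi_2=c_2/(\beta^2 t+c_1)^2$ into both equations and clearing denominators, each becomes a polynomial in $t$ of degree at most one. Setting all its coefficients to zero I expect to determine $\alpha_1$ and $\alpha_3$ as explicit linear functions of $\alpha_2$ with coefficients built from $c_1$ (and possibly $c_2$), while $\alpha_2$ itself remains a free nonzero parameter. The main (and essentially only) obstacle is the compatibility check: the first equation fixes $\alpha_1$ as a multiple of $\alpha_2$, and the second equation, being affine in $t$, splits into two conditions; for the theorem to hold one must verify that the coefficient of $t$ in the second equation determines $\alpha_3$ and the constant term is then automatically consistent with the value of $\alpha_1$ found from the first equation. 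Once this cancellation is observed, one obtains a nontrivial triple $(\alpha_1,\alpha_2,\alpha_3)$ depending on $c_1,c_2$, which proves invariance of the partial solution under the corresponding one-parameter subgroup.
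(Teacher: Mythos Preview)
Your proposal is correct. The splitting by powers of $\mathrm{e}^{\beta x}$ yields exactly the two conditions you wrote; substituting \eqref{r2} into the first gives $\alpha_1=\tfrac{2c_1}{\beta^2}\alpha_2$, and the second (for $c_2\neq 0$) splits into $\alpha_3=-\alpha_2/\beta$ from the $t$-coefficient and a constant term that reproduces $\alpha_1=\tfrac{2c_1}{\beta^2}\alpha_2$ automatically, so the compatibility you flagged as the only obstacle indeed holds.

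The paper, however, argues differently. Rather than splitting by the $x$-dependence and then inserting the explicit $\varphi_i(t)$, it computes, for each basis generator $X_i$, the quantity $X_i(f(x,t)-u)\big|_{u=f}$ and shows it lies in the two-dimensional space $W_2=\operatorname{span}\{\partial u/\partial c_1,\partial u/\partial c_2\}$; since three vectors in a two-dimensional space are linearly dependent, a nontrivial $(\alpha_1,\alpha_2,\alpha_3)$ exists. Your route is more elementary and has the advantage of producing the $\alpha_i$ explicitly with almost no machinery. The paper's route is more conceptual: it isolates the structural reason the result holds (a three-dimensional symmetry algebra acting on a two-parameter family of solutions), and would transfer unchanged to any analogous situation without redoing the algebra.
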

\begin{proof}
Note that in general case $\frac{\partial u}{\partial \varphi_1}$ and $\frac{\partial u}{\partial \varphi_2}$ are linearly independent. Otherwise, for some $\beta_1$, $\beta_2$, equation $\beta_1\frac{\partial u}{\partial \varphi_1}+\beta_2\frac{\partial u}{\partial \varphi_2}=0$ would be true, and
therefore $u$ would be dependent on only one constant,
\[u=f(x,t,\beta_2\varphi_1-\beta_1\varphi_2),\]
which is impossible, because it is a general solution of a second order ordinary differential equation~\eqref{zwy3}.
Moreover $\frac{\partial u}{\partial c_1}$, $\frac{\partial u}{\partial c_2}$ are linearly independent since $\varphi_1$,
$\varphi_2$ is the general solution of \eqref{red01} and has the form~\eqref{r2}.

The action of the operator $X=\xi_j(x,u)\frac{\partial}{\partial x_j}+\eta(x,u)\frac{\partial}{\partial u}$, is the following
\[X\big(h(x)-u)\big)\big|_{u=h(x)}=\left(\xi_j(x,u)\frac{\partial h(x)}{\partial x_j}-\eta(x,u)\right)\Big|_{u=h(x)},\]
where $x=(x_1,\ldots,x_n)$ for some integer $n$, $h(x)$ is a differentiable function. Then we show that
\[Q\big(f(x,t)-u)\big)\big|_{u=f(x,t)}\in W_2\]
where $W_2=\operatorname{span}\big\{\frac{\partial u}{\partial c_1}, \frac{\partial u}{\partial c_2}\big\}$ and $Q\in A_3$.
To prove this, it is enough to show this property for each of the basis elements $X_1$, $X_2$, $X_3$.

By direct computation we show that for $X_1=\partial_t$
\[X_1\big(f(x,t)-u)\big)\big|_{u=f(x,t)}=\beta^2\frac{\partial u}{\partial c_1},\]
for $X_2=2t\partial_t+u\partial_u$
\[X_2\big(f(x,t)-u)\big)\big|_{u=f(x,t)}=-2c_1\frac{\partial u}{\partial c_1}-2c_2\frac{\partial u}{\partial c_2}\]
and for $X_3=2\partial_x+\beta u\partial_u$
\[X_3\big(f(x,t)-u)\big)\big|_{u=f(x,t)}=-2\beta c_2\frac{\partial u}{\partial c_2},\]
where $f(x,t)$ is the solution of \eqref{gole} given by \eqref{ansatzE}, \eqref{r2}.

From the fact that any three vectors in two-dimensional vector space are linearly dependent, it follows that for any special solution \eqref{ansatzE}, \eqref{r2} there can be selected $\alpha_1$, $\alpha_2$, $\alpha_3$ such that
$X \big(f(x,t)-u)\big)\big|_{u=f(x,t)}=0$, where $X=\alpha_1X_1+\alpha_2X_2+\alpha_3X_3$ and not all $\alpha_i$ are equal to zero.\end{proof}
We conclude that every solution given by \eqref{ansatzE}, \eqref{r2} can be found using a classical method of invariant solutions with respect to a one-parameter Lie group $\alpha_1X_1+\alpha_2X_2+\alpha_3X_3$.

This theorem is only sufficient but not necessary condition for a solution to be invariant in a classical sense. In fact, let us consider the 2-dimensional abelian subalgebra of Lie algebra $A_3$, with basis elements $Q_1$, $Q_2$, where
\begin{gather*}Q_1=2c_1X_1+\beta^2X_2=2\big(\beta^2t+c_1\big)\partial_t+\beta^2u\partial_u,\\
 Q_2=X_3=2\partial_x+\beta u\partial_u\end{gather*}
(these operators clearly commute, $\left[Q_1,Q_2\right]=0$). Because
\[Q_1\big(f(x,t)-u)\big)\big|_{u=f(x,t)}=-2c_2\beta^2\frac{\partial u}{\partial c_2}\]
and
\[Q_2\big(f(x,t)-u)\big)\big|_{u=f(x,t)}=-2\beta c_2\frac{\partial u}{\partial c_2},\]
the solution $u=f(x,t)$ is invariant with respect to a linear combination $Q_1-\beta Q_2$.
On the other hand, it is obvious that this solution cannot be obtained with a classical method using just any 2-dimensional subalgebra, because not every 2-dimensional algebra has the aforementioned properties, for example $\{X_1,X_3\}$. No nonzero linear combination of $X_1$ and $X_3$ leaves the solution invariant.

\section{Conclusions}

We obtain the large classes of evolution and non-evolution type equations to which the symmetry reduction method proposed in~\cite{Tsy} is applicable. We construct such equations by using the generalized symmetry of second order ODEs. We obtain the solutions of nonlinear hyperbolic equations \eqref{waveeqat} and \eqref{waveeqa1}, which depend on a arbitrary function $\varphi_1(x_2)$.

We show that in the case when nonlinear heat equations invariant with respect to one- and two-parameter groups one can
find solutions which don't satisfy the invariance surface condition and thus can not be obtain by virtue of classical infinitesimal method. It was shown in Section 2 that the approach is also applicable to ordinary differential equations.
Note that the reduction method can also be applied for reducing the Cauchy problem for PDEs to the Cauchy problem for a system of ODEs.

\end{document}